\theoremstyle{theorem}
\newtheorem{theorem}{Theorem}
\newtheorem{lemma}[theorem]{Lemma}
\theoremstyle{definition}
\DeclareMathOperator{\cut}{Cut}
\mathchardef\mhyphen="2D
\def\semicolon{;}
\def\applytolist#1{
    \expandafter\def\csname multi#1\endcsname##1{
        \def\multiack{##1}\ifx\multiack\semicolon
            \def\next{\relax}
        \else
            \csname #1\endcsname{##1}
            \def\next{\csname multi#1\endcsname}
        \fi
        \next}
    \csname multi#1\endcsname}
\def\calc#1{\expandafter\def\csname c#1\endcsname{{\mathcal #1}}}
\def\bbc#1{\expandafter\def\csname bb#1\endcsname{{\mathbb #1}}}
\def\bfc#1{\expandafter\def\csname bf#1\endcsname{{\mathbf #1}}}
\def\sfc#1{\expandafter\def\csname s#1\endcsname{{\sf #1}}}
\def\fc#1{\expandafter\def\csname f#1\endcsname{{\mathfrak #1}}}
\begin{document}

\title{Asymptotics of Redistricting the $n\times n$ Grid}
\markright{Asymptotics of redistricting}
\author{Christopher Donnay and Matthew Kahle}  
\maketitle

\begin{abstract}
Redistricting is the act of dividing a region into districts for electoral representation. Motivated by this application, we study two questions.
How many ways are there to partition the $n\times n$ grid into $n$ contiguous districts of equal size?
How many of these partitions are ``compact"?
We give asymptotic bounds on the number of plans: a lower bound of roughly $1.41^{n^2}$ and an upper bound of roughly $3.21^{n^2}$.
We then use the lower bound to show that most plans are not compact.
\end{abstract}

\epigraph{``So you've got---let's say you've got 100 maps or you might even have 25. 
I think you probably have thousands."}{Justice Alito in \textit{Oral Arguments for Rucho v. Common Cause 2019}}
\section{Introduction.}
\label{introduction}
Redistricting is the act of dividing a region into districts for electoral representation.
Mathematics has played an increasingly important role in redistricting in the U.S. in the past decade; see, for example, the amicus brief to the Supreme Court in the case Rucho v. Common Cause, 2019 \cite{math_amicus_brief}.
One of the first natural questions to arise in the mathematical study and litigation of redistricting
is, ``How many redistricting plans are there for a particular region?"
While this question may be impossible to answer precisely in real world examples (sorry Justice Alito!), we can make provable statements if we use a simplified model.

Consider the $n\times n$ grid.
Assume that each square of the grid represents a voter.
A redistricting plan is a geometric partition of the grid into $n$ districts where we require that each piece of the partition forms a connected region and that each district has exactly $n$ whole squares.\footnote{Connectivity and population balance are common real world redistricting requirements. We consider two squares to be connected if they share an edge of positive length, i.e., corners do not count. While exact population balance is required for U.S. Congressional districts, many state and local level districts allow for some imbalance, say up to about 5\%.}
The districts are \emph{$n$-ominoes}, shapes that can be made by gluing $n$ congruent squares edge to edge, so a redistricting plan is a tiling of the $n\times n$ grid by $n$-ominoes.

Let $\cP_n$ denote the set of tilings of the $n\times n$ grid by $n$-ominoes.
We are interested in studying $|\cP_n|$, the number of such tilings, as well as the structure of the tilings themselves.
These tilings are in bijection with certain partitions of the \emph{dual graph}, a graph made by assigning a vertex to each square of the grid, and connecting two vertices if their squares share an edge of positive length.
Then there is a 1-1 correspondence between partitions of the dual graph into $n$ disjoint, connected subgraphs with $n$ vertices and tilings of the $n\times n$ grid by $n$-ominoes.
It will be useful to pass back and forth between these perspectives; see Figure \ref{fig:dual_graph}.
\begin{figure}
    \centering
    \includegraphics{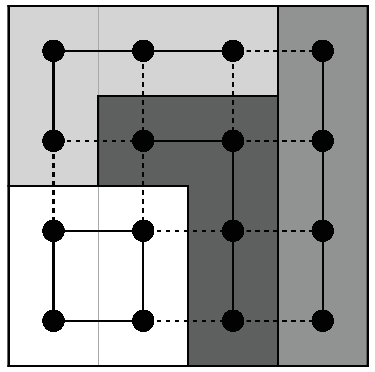}
    \caption{A tiling of the $4\times 4$ grid by tetrominoes and the corresponding dual graph and partition; dashed edges are cut in the partition. The cut score of this plan is 11.}
    \label{fig:dual_graph}
\end{figure}

Surprisingly little is known about $|\cP_n|$.
Only the values for $n=1,\dots,9$ have been computed, which we have reproduced in Table \ref{table: known values nxn} from  \cite{mggg_grid_partitions}.
\begin{table}[h]
\centering
\caption{The known values of $|\cP_n|$ as computed by the MGGG Redistricting Lab \cite{mggg_grid_partitions}.  The case of $n=10$ was too computationally intensive to be completed. This is OEIS sequence A172477. The original enumeration of the $9\times 9 $ grid can be found in \cite{harris_enumeration}. The same table from MGGG also has enumerations when you allow for population imbalance in the districts.\\}
\label{table: known values nxn}

\begin{tabular}{c|c}
$n$ & $|\cP_n|$        \\ \hline
2   & 2                   \\
3   & 10                  \\
4   & 117                 \\
5   & 4,006               \\
6   & 451,206             \\
7   & 158,753,814         \\
8   & 187,497,290,034     \\
9   & 706,152,947,468,301
\end{tabular}
\end{table}
Clearly the values of $|\cP_n|$ are undergoing combinatorial explosion.
There are over 700 trillion ways to tile the $9\times 9$ grid with 9-ominoes; there are more redistricting plans for the $25\times 25$ grid than there are atoms in the universe. 
Real world dual graphs are \emph{far} larger and \emph{far} more complex than the $25\times 25$ grid graph; the dual graph of Ohio (when divided into Census blocks) has over 365,000 vertices.
This should give some perspective on just how many possible plans there are in real world examples.
When studying redistricting plans we must be content with sampling instead.

In generating a sample, what distribution should we be sampling from?
It is tempting to say that we should sample redistricting plans uniformly from the space of all plans.
However, when done in practice, we observe that almost every plan that is generated is full of long, snakey districts.
See Figure \ref{fig:fractal_tiling}.
Not only that, uniform sampling itself is a difficult problem with a variety of obstructions \cite{frieze_subexp_mixing, najt2019complexitygeometrysamplingconnected}.\footnote{There are also interesting connections between sampling partitions and sampling non-intersecting lattice paths \cite{pegden2023directsamplingshortpaths, najt_emp_sampling}.}
\begin{figure}
    \centering
    \includegraphics[width=.45 \textwidth]{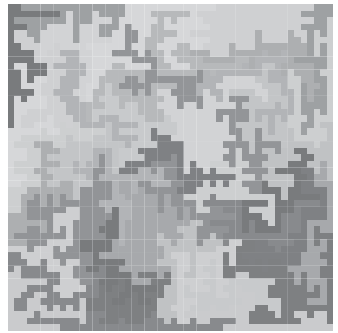}
    \caption{A ``typical" tiling of the $50\times 50$ grid is non-compact. This figure was generated using the Python package \texttt{gerrychain} and a swap Markov chain whose stationary distribution is uniform on $\cP_{50}$ \cite{gerrychain}. Here the fraction of cut edges is roughly 0.41. Lemma \ref{lemma: cut score bounds} gives a lower bound of about 0.144 and an upper bound of exactly 0.5.}
    \label{fig:fractal_tiling}
\end{figure}
These snakey plans would never be enacted, as they are not \emph{compact}.
This is not the usual meaning of compactness from topology.
In the context of redistricting, compact districts are districts that have a reasonable shape.
One of the authors' favorite non-compact districts is featured in Figure \ref{fig:goofy kicking}. 
\begin{figure}
    \centering
    \includegraphics{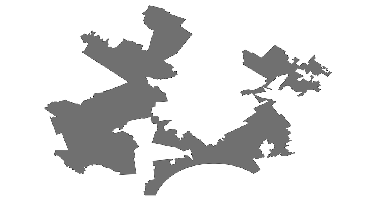}
    \caption{The ``Goofy Kicking Donald Duck" district, which was Pennsylvania's 7th Congressional district in the 2010 redistricting cycle \cite{wapo_goofy}. This district is not compact. The entire plan was struck down by the Pennsylvania Supreme Court as a partisan gerrymander in 2018.}
    \label{fig:goofy kicking}
\end{figure}
Despite its ``we'll know it when we see it" definition, compactness is a nigh universal redistricting requirement.
For a discussion of a sampling method that does favor compact districts, we refer readers to \cite{deford2019recombination}.

Traditionally, compactness has been measured using the Polsby--Popper score, which is just an eponymous recasting of the isoperimetric inequality \cite{polsbypopper}.
The Polsby--Popper score of a district $D$ with area $A(D)$ and perimeter $P(D)$ is defined as \[PP(D) = \frac{4\pi A(D)}{P(D)^2}.\]
This is a ratio between 0 and 1, with 1 being achieved by the circle.
Notably, the Polsby--Popper score is for a single district.
Rather than take some summary statistic over the set of districts in a plan,
we instead use the \emph{cut score}, which is a plan-wide score, as explained in \cite{compactness_explainer}.\footnote{A review of the history of cut scores and cut sets can be found in Section 5.4 of \cite{duchin2023discrete}. This is also related to the Cheeger constant, which plays an important role in spectral graph theory.}
Let $\cut(P)$, the cut score of a partition $P$, denote the number of edges $\{u,v\}$ such that $u$ is in one district and $v$ is in another. 
Then $\cut(P)$ is a discrete measure of the compactness of a redistricting plan; a higher cut score indicates elongated boundaries between districts. 
For an example of this computation, refer back to Figure \ref{fig:dual_graph}.

We will give asymptotic bounds on the rate of growth of $|\cP_n|$: a lower bound of roughly $1.41^{n^2}$ and an upper bound of roughly $3.21^{n^2}$.
We compare our upper and lower bounds to the known values in Figure \ref{fig:log plot}.
Our proofs are elementary and nearly self-contained. 
We will then use our lower bound to prove that most redistricting plans are not compact.
In other words, a randomly selected districting plan will have a high cut score, thus giving rigor to the observation that uniform sampling only produces non-compact districts.

 \begin{figure}
        \centering
        \includegraphics{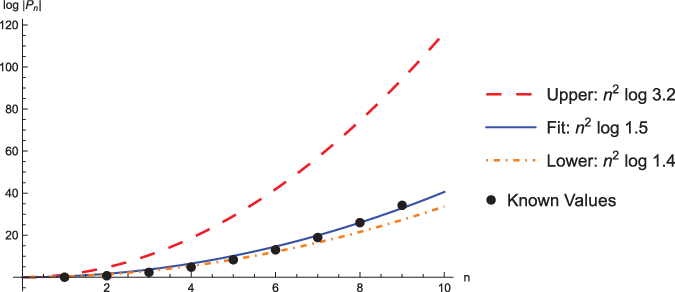}
        \caption{The known values of $\log |\cP_n|$ for $n=1,\dots,9$, along with our upper and lower bounds. The solid line is a quadratic fit of the data points.}
        \label{fig:log plot}
    \end{figure}

\section{Asymptotic Growth of $|\cP_n|$.}
In discussing the asymptotics of $|\cP_n|$, we will make use of the relations given in Table \ref{tab:asymptotic notation}.

\begin{table}[]
    \centering
    \caption{Notations for asymptotic relations used in this article.\\}
    \label{tab:asymptotic notation}
    \begin{tabular}{c|c|c}
    Relation & Example & Formal Definition \\ \hline \rule{0pt}{15pt}
        $f\approx g$ & $3n^2\approx n^2$ & $0< \liminf_{n\to\infty} \frac{f(n)}{g(n)}\le\limsup_{n\to\infty} \frac{f(n)}{g(n)}<\infty  $  \\ \rule{0pt}{15pt}
        $f\sim g$ & $n!\sim \sqrt{2\pi n} \left(\frac{n}{e}\right)^n$ & $\lim_{n\to\infty} \frac{f(n)}{g(n)} = 1$\\ \rule{0pt}{15pt}
        $f \lesssim g$ & $\frac{1}{n} \lesssim 1$ & $\limsup_{n\to\infty} \frac{f(n)}{g(n)}<\infty$\\ \rule{0pt}{20pt}
        $f \ll g$ & $n \ll n^2$ & $\lim_{n\to\infty} \frac{f(n)}{g(n)} = 0$\\ \rule{0pt}{15pt}
            $f=o(1)$ & $f(n)=\frac{1}{\log(n)}$ & $\lim_{n\to\infty} f(n) = 0 $\\
    \end{tabular}
    
\end{table}
For our upper bound, it will be convenient to talk in the language of partitions.\footnote{In the real world, districting plans are labeled, but often in ways that are difficult to account for. We choose to use unlabeled partitions, but readers should note this affects our counts.}
Let $G_n$ denote the dual graph of the $n\times n$ grid, which is itself an $n\times n$ grid graph.
A trivial upper bound on the number of partitions of any graph is $2^{|E(G)|}$,  since any partition can be described by which edges of the graph are on or off as in Figure \ref{fig:dual_graph}.
In the case of the $n\times n$ grid graph, $|E(G_n)|=2n(n-1)$,  and thus a trivial upper bound is \[
|\cP_n| \le 2^{2n(n-1)} = 4^{n^2-n}= (4^{1-\frac{1}{n}})^{n^2}= (4-o(1))^{n^2}.\] 
Any partition of $G_n$ into $n$ pieces can be constructed by drawing a spanning tree on the vertices of $G_n$ and cutting $n-1$ edges of the tree.
See Figure \ref{fig:spanning_tree_cut}.
\begin{figure}
    \centering
    \includegraphics[width=.5 \textwidth]{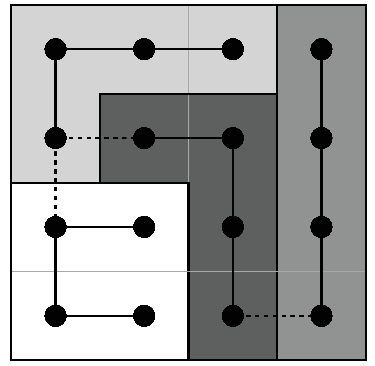}
    \caption{One of the spanning trees that could have generated the partition in Figure \ref{fig:dual_graph}. The cut edges are dashed.}
    \label{fig:spanning_tree_cut}
\end{figure}
The asymptotics of the number of spanning trees on the square grid were proved over a series of papers, culminating in \cite{wu1977number}. We have that \[\lim_{n\to\infty} \frac{\log \tau(G_n)} {n^2} = \frac{4C}{\pi},\] where $\tau(G_n)$ is the number of spanning trees of the $n\times n$ grid graph, and $C$ is Catalan's constant.\footnote{Catalan's constant is defined as $C:= \sum_{n=0}^\infty \frac{(-1)^n}{(2n+1)^2}$, and appears in topology, combinatorics, and statistical mechanics.}
Let $\mathfrak{b}:= \exp\left(\frac{4 C}{\pi}\right)= 3.2099\dots$.
Then a better upper bound for $|\cP_n|$ is the following.
\begin{theorem}[An Upper Bound]
\label{thm:upper bound}
$|\cP_n|\lesssim(\mathfrak{b}+o(1))^{n^2}$. 
\end{theorem}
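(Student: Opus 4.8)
The plan is to bound $|\cP_n|$ by counting the (spanning tree, cut set) pairs that can generate partitions, using the fact that every partition arises from at least one such pair. As noted in the excerpt, any partition of $G_n$ into $n$ connected pieces can be realized by choosing a spanning tree $T$ of $G_n$ and deleting $n-1$ of its edges. Since a spanning tree of $G_n$ has exactly $n^2-1$ edges, the number of ways to select which $n-1$ to cut is $\binom{n^2-1}{n-1}$. Because distinct partitions may be produced by several tree/cut choices, this count overcounts, and we obtain the clean upper bound
\[
|\cP_n| \;\le\; \tau(G_n)\binom{n^2-1}{n-1}.
\]

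The remaining work is to show that the binomial factor is negligible on the exponential-in-$n^2$ scale. First I would use the standard estimate $\binom{N}{k}\le (eN/k)^k$ with $N=n^2-1$ and $k=n-1$, which gives $\binom{n^2-1}{n-1}\le (e(n+1))^{n-1}$, whose logarithm is $O(n\log n)=o(n^2)$. Thus the binomial coefficient contributes only a factor $e^{o(n^2)}$. Next I would invoke the spanning-tree asymptotics from \cite{wu1977number}, which give $\log\tau(G_n)=\tfrac{4C}{\pi}n^2 + o(n^2)$, i.e.\ $\tau(G_n)=\mathfrak{b}^{n^2}e^{o(n^2)}$. Combining the two estimates,
\[
\log|\cP_n| \;\le\; \log\tau(G_n)+\log\binom{n^2-1}{n-1} \;=\; \frac{4C}{\pi}\,n^2 + o(n^2),
\]
and exponentiating yields $|\cP_n|\le \mathfrak{b}^{n^2}e^{o(n^2)}=(\mathfrak{b}+o(1))^{n^2}$, since $(\mathfrak{b}+o(1))^{n^2}=\exp\!\big(n^2\log\mathfrak{b}+o(n^2)\big)$.

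I do not expect any step here to be a genuine obstacle; the argument is essentially a packaging of the two inputs already in hand. The only point requiring care is the overcounting claim — that every equal-size partition genuinely arises from a spanning tree with $n-1$ cut edges — which follows by taking a spanning tree inside each piece and splicing them together with a spanning tree of the contracted (quotient) graph, the $n-1$ connecting edges being precisely the cut edges. Since this is used only to produce an inequality, the fact that a generic choice of $n-1$ cut edges need not yield pieces of equal size is harmless. The one piece of bookkeeping to confirm is that the subexponential error terms from both the binomial factor and the tree count can be absorbed into a single $o(1)$ in the base, which the computation above makes explicit.
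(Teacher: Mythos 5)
Your proposal is correct and takes essentially the same approach as the paper: bound $|\cP_n|$ by $\tau(G_n)$ times a binomial coefficient counting the choices of $n-1$ cut edges, show that the binomial factor contributes only $e^{o(n^2)}$, and absorb both subexponential errors into the $(\mathfrak{b}+o(1))^{n^2}$ base. The only (harmless) differences are that you choose the cut edges from the tree's $n^2-1$ edges rather than from all $2n(n-1)$ edges of $G_n$ as the paper does, and that you spell out the tree-splicing justification of the overcounting claim, which the paper asserts with a figure.
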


\begin{proof}
By the argument above,  $|\cP_n|\le \tau(G_n) {n^2-1\choose n-1}$.
We have that $\tau(G_n) = (\mathfrak{b}+o(1))^{n^2}$.
Moreover, 
\begin{align*}
{n^2-1\choose n-1} & \le \frac{(n^2-1)^{n-1}}{(n-1)!} \ll \frac{(n^2)^n}{(n-1)!}\\
& \sim n(n^2)^n \frac{1}{\sqrt{2\pi n} \left(\frac{n}{e}\right)^n} \quad \text{(Stirling's approximation)}\\
& \approx (en)^n \sqrt{n} = (en^{1+\frac{1}{2n}})^n=(en+o(1))^n.
\end{align*}
Thus,
\[  |\cP_n| \le \tau(G_n) {n^2-1\choose n-1}\lesssim(\mathfrak{b}+o(1))^{n^2}(en+o(1))^n= (\mathfrak{b}+o(1))^{n^2}.\qedhere\]
\end{proof}

To prove a lower bound, it suffices to construct some number of tilings of the $n\times n$ grid.
An earlier version of this paper had a lower bound of $(3^{1/6}-o(1))^{n^2}$.
Thanks to a proof suggestion from Jamie Tucker-Foltz, we were able to improve the bound to the following.

\begin{theorem}
$|\cP_n|\gtrsim (2^{1/2}+o(1))^{n^2}$.
\end{theorem}
\label{thm: lower bound}

\begin{proof}
Our general strategy will be to fix a partial tiling of the grid, and then complete the tiling in a large number of ways.
As we complete the tiling, we ensure that the districts are always contiguous and of equal size.

Assume first that $n\equiv 0 \pmod{4}$.
Partially tile the grid as in Figure \ref{fig:n over 2 tiling}.
Across the top row of the grid,  starting in the first column, place a tile of size $\frac{n}{4}\times 1$ in every other column, giving each tile a unique district color.
Place one more tile in the final column, assigning it to the same district as the penultimate column's tile.
Then in row $\frac{n}{4}+1$, do the same, but place the tiles in the columns you skipped before. 
Assign them to the same district as the tile that is queen adjacent to their upper left corner.
Place one more tile of size $\frac{n}{4}\times 1$ in the first column at row $\frac{n}{4}+1$, and give it the same color as the district already in that column.
With different colors,  identically tile the bottom half of the grid.
Leave the remaining squares of the grid untiled.
The other three congruence cases, $n\equiv 1,2,3\pmod{4}$,  are an essentially identical argument with the same asymptotics. 
See Figure \ref{fig:n over 2 tiling other congruence}.

\begin{figure}
        \centering
        \includegraphics{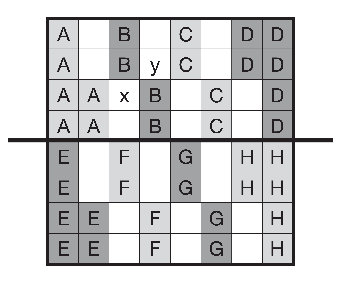}
        \includegraphics{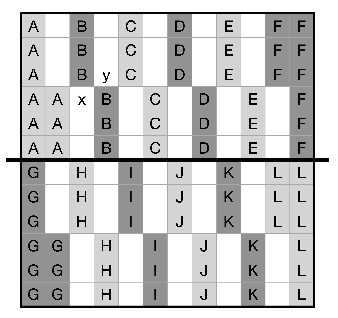}
        \caption{The initial partial tiling of the $n\times n$ grid with $n=8$ and $n=12$. Note that in this figure, each capital letter represents a unique district; that is, even though the two sets of B squares are discontiguous, they both represent the same district, and will be made to be contiguous later.  White denotes squares that have yet to be assigned a district.}
        \label{fig:n over 2 tiling}
    \end{figure}

Now we wish to complete the partial tiling in as many ways as possible.
Our end goal is to have $n/2$ districts in the top half of the grid and $n/2$ in the bottom. 
\begin{figure}
        \centering
        \includegraphics{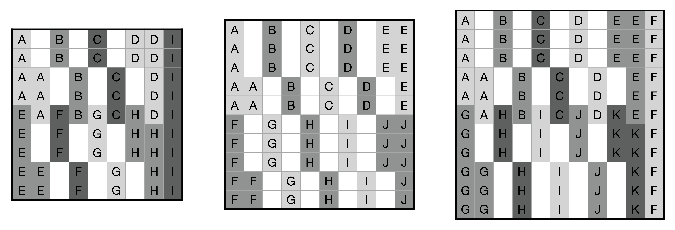}
        \caption{The other initial partial tilings for $n\equiv 1,2,3\pmod{4}$.  Note that in this figure, each letter represents a unique district; that is, even though the two sets of B squares are discontiguous, they both represent the same district, and will be made to be contiguous later.  White denotes squares that have yet to be assigned a district.}
        \label{fig:n over 2 tiling other congruence}
    \end{figure}
Choose $n/4$ of the $n/2$ white squares to the right of district A to assign to district A. 
Assign the remaining $n/4$ to district B.
Repeat this process moving across the grid.
An example of a completed tiling is given in Figure \ref{fig:example_n_over_2}.
\begin{figure}
        \centering
        \includegraphics{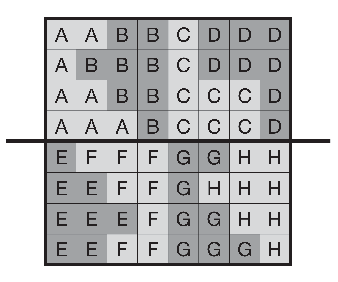}
        \caption{A completed tiling of the $8\times 8$ grid after assigning the white squares.}
        \label{fig:example_n_over_2}
    \end{figure}
There are 
\[{n/2\choose n/4}^{n-2}\]
ways to assign these white squares.
However, not all assignments create contiguous districts.
We must have that at least one of the squares $x,y$ are assigned to district B (and likewise for districts C,  F,  and G in the $8\times 8$ example of Figure \ref{fig:n over 2 tiling}).
It is easier to count the fraction of assignments in which both $x$ and $y$ are \emph{not} assigned to district B; this is computed as 
\[\left(\frac{{n/2-1\choose n/4}}{{n/2\choose n/4}}\right)^2 = \frac{1}{4}.\]
Hence
\[|\cP_n|\ge \left(\frac{3}{4}\right)^{n-4}{n/2\choose n/4}^{n-2}.\]

\clearpage
Now it is just a matter of computing the asymptotics of the above formula.
By Stirling's approximation,
\begin{align*}
|\cP_n| &\ge \left(\frac{3}{4}\right)^{n-4}{n/2\choose n/4}^{n-2}\\
& \approx  \left(\frac{3}{4}\right)^{n-4} \left(2^{n/2}\sqrt{\frac{4}{\pi n}}\right)^{n-2}\\
& = (2^{1/2}+o(1))^{n^2}.\qedhere
\end{align*}
\end{proof}

\section{Most Redistricting Plans Are Not Compact.}
In order to measure the compactness of a redistricting plan, we will use the cut score defined in Section \ref{introduction}.
We note that a similar result holds if you instead frame your compactness bound in terms of an isoperimetric constraint.
We first observe that there are tight bounds on the cut score of a plan $P\in \cP_n$. 
\begin{lemma}
\label{lemma: cut score bounds}
For all $P\in \cP_n$, we have that $2n^{3/2}-2n\le \cut(P)\le n(n-1)$.
\end{lemma}

\begin{proof}
The upper bound comes from the observation that the plans with the highest cut scores are when the dual graph of each district is a tree.
For the lower bound, let $P_i$ denote the $i$th district in plan $P$, and let $|\partial P_i|$ denote the length of the perimeter of $P_i$.
Since \[\sum_{i=1}^n |\partial P_i| = 2\cut(P) +4n,\] and $|\partial P_i|\ge 2\lceil 2\sqrt{n}\rceil$ by \cite{extremal_stats_polyominoes}, we have $\cut(P) \ge 2n^{3/2}-2n$.
This lower bound is achieved by the plan where each $n$-omino is a $\sqrt{n}\times\sqrt{n}$ square.
See Figure \ref{fig:cut scores}.
\end{proof}

\begin{figure}
    \centering
    \includegraphics[width=.9 \textwidth]{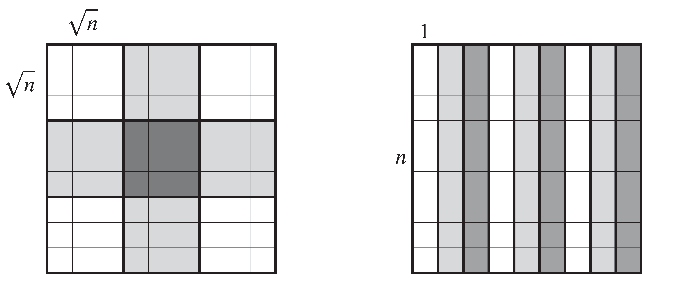}
    \caption{On the left,  a very compact tiling of the grid by squares, with cut score roughly $n^{3/2}$. As a least compact example, the tiling on the right by $n\times 1$ rectangles has cut score $n(n-1)$.}
    \label{fig:cut scores}
\end{figure}

We show below that a typical redistricting plan is non-compact.
More precisely, a typical redistricting plan has a cut score on the order of $n^2$, i.e., a constant fraction of the highest cut score possible according to Lemma \ref{lemma: cut score bounds}.
For $\varepsilon>0$, let $\cC_n(\varepsilon)\subseteq \cP_n$ denote the subset of partitions $P$ such that $\cut(P)\le \varepsilon n^2$.

\clearpage
\begin{theorem}
Let $0<\varepsilon<0.08$. Then $\lim_{n\to\infty} \frac{|\cC_n(\varepsilon)|}{|\cP_n|}  =  0$.
\end{theorem}

\begin{proof}
Since a partition is determined by its cut edges, our compactness assumption implies that 
\[|\cC_n(\varepsilon) |\le \sum_{j = n-1}^{\varepsilon n^2} {2n(n-1)\choose j}.\]
Note that we need at least $n-1$ cuts to make $n$ districts, and hence our sum begins indexing from $n-1$.
Then asymptotically we have 
\begin{align*}
\sum_{j = n-1}^{\varepsilon n^2} {2n(n-1)\choose j} & \le \sum_{j = n-1}^{\varepsilon n^2} {2n(n-1)\choose \varepsilon n^2} \quad (\text{$\varepsilon n^2 \le n(n-1)$ so binomial coefficients are increasing})\\
&\le n^2 {2n(n-1)\choose \varepsilon n^2} \ll  n^2 \frac{(2n^2)^{\varepsilon n^2}}{(\varepsilon n^2)!}\\
& \sim \frac{n}{\sqrt{2\pi \varepsilon}} \left(\frac{2e}{\varepsilon}\right)^{\varepsilon n^2}\quad(\text{Stirling's approximation})\\
& = \left(\left(\frac{2e}{\varepsilon}\right)^{\varepsilon}+o(1)\right)^{n^2}.
\end{align*}

Thus in order for $\lim_{n\to\infty} \frac{|\cC_n(\varepsilon)|}{|\cP_n|}  =  0$, by Theorem \ref{thm: lower bound} it suffices that $\left(\frac{2e}{\varepsilon}\right)^{\varepsilon} < 2^{1/2}$.
This occurs for sufficiently small $\varepsilon$  ($\varepsilon<.08$ is sufficient).\qedhere
\end{proof}

\section{Conclusion.}
We have shown that $|\cP_n|$ grows exponentially in $n^2$, and the base of our lower and upper bound differ by about 1.8. 
We have also shown that a typical plan is not compact, with cut score of the highest order possible.
Thus, there are many redistricting plans and most are not compact.

The proof of our upper bound generalizes quite easily to the case of $k \ll n^2$ districts, as well as population imbalance in the districts.
In fact, our argument in the proof of Theorem \ref{thm:upper bound} explicitly ignored the population balance constraint.
We imagine that the proof of the lower bound (and thus the compactness result) generalizes as well, with a bit more work to define the initial partial tilings, when the number of districts $k$ is small, perhaps $k=o(n^{3/2})$.

There is still a lot that we do not know.
For example,  is $|\cP_n|$ increasing in $n$? 
There is no clear injection from $\cP_n\to \cP_{n+1}$ due to the change in the size of the tiles.
Some other interesting open problems include:
\begin{enumerate}
\item Can the upper bound be improved? The only way to improve our current argument is to get a handle on the number of spanning trees which can be cut into balanced pieces. Recent progress on the question of cutting spanning trees into equal pieces can be found in \cite{cannon2024sampling}.

\item Can the lower bound be extended to districts with imbalance in size?

\item Does $\lim_{n\to\infty} |\cP_n|^{1/n^2}$ exist? 
This would follow immediately from Fekete's lemma if one could show that $|\cP_m| \times |\cP_n| \le |\cP_{m+n}|$.

\item Can the compactness results be extended to other compactness scores, such as the spanning tree score of \cite{duchin2023discrete}?

\item What can be said for more general dual graphs $G$, like subsets of the grid, non-square grids, or subsets of the triangular lattice?\footnote{The triangular lattice has also been proposed as a good abstraction of real world dual graphs, since these graphs frequently have many triangles.} 
We expect that in all scenarios, the number of partitions should be exponential in the size of the graph, with the same proof techniques going through.
\end{enumerate}

\section{Acknowledgments.} 
The first author would like to thank Moon Duchin and the second author for co-advising him.
The authors would like to thank Moon Duchin, Carlos Mart\'inez, Dustin Mixon, Jamie Tucker-Foltz, the Editorial Board, and the two anonymous reviewers for helpful conversations and suggestions that improved the quality of this paper.
This material is based upon work supported by the National Science Foundation under Grant No. DMS-1928930 and by the Alfred P. Sloan Foundation under grant G-2021-16778, while the first author was in residence at the Simons Laufer Mathematical Sciences Institute (formerly MSRI) in Berkeley, California, during the Fall 2023 semester. The authors gratefully acknowledge NSF-DMS \#1547357 and \#2005630.

\bibliographystyle{vancouver}
\bibliography{main.bbl}

\begin{biog}
\item[Christopher Donnay] Chris(topher) Donnay is a PhD candidate in mathematics at \emph{The} Ohio State University.  His interests are in stochastic topology, redistricting, and computational social choice.  As a former high school math and computer science teacher, Chris is also passionate about pedagogy and science communication.
\begin{affil}
Department of Mathematics, The Ohio State University\\
donnay.1@osu.edu
\end{affil}
\end{biog}

\begin{biog}
\item[Matthew Kahle] Matthew Kahle has been faculty at \emph{The} Ohio State University since 2011.  His mathematical interests include various interactions of topology and geometry with combinatorics, probability, and statistical physics. Outside of mathematics, he enjoys spending time with his family, cooking, and bicycle commuting.

\begin{affil}
Department of Mathematics, The Ohio State University\\
\end{affil}
\end{biog}

\vfill\eject

\end{document}